\newtheorem{thm}{Theorem}[section]
\newtheorem{lem}[thm]{Lemma}
\theoremstyle{definition}
\newtheorem{defn}[thm]{Definition}
\theoremstyle{remark}
\newtheorem{rem}[thm]{Remark}
\numberwithin{equation}{section}
\newtheorem{example}[thm]{Example}
\newcommand{\set}[1]{\left\{#1\right\}}
\newcommand{\Real}{\mathbb R}
\newcommand{\Z}{\text{$\mathbb{Z}$}}
\newcommand{\N}{\text{$\mathbb N$}}
\newcommand{\To}{\longrightarrow}
\newcommand{\paren}[1]{\left(#1\right)}
\newcommand{\fmonio}{\tilde{f}_i^\ell}
\newcommand{\fil}{f_i^\ell}
\begin{document}

\title[Dual fusion frames]{Dual fusion frames}

\author[S. B. Heineken]{S. B. Heineken}
\address{%
Departamento de Matem\'atica, FCEyN, Universidad de Buenos Aires and
IMAS-CONICET,
\\ Pabell\'on I, Ciudad Universitaria,
C1428EGA C.A.B.A., Argentina.}
\email{sheinek@dm.uba.ar}%

\author[P. M. Morillas]{P. M. Morillas}
\address{%
Instituto de Matem\'{a}tica Aplicada San Luis, UNSL-CONICET and
Departamento de
Matem\'{a}tica, FCFMyN, UNSL,\\
         Ej\'{e}rcito de los Andes 950, 5700 San Luis, Argentina.}%
\email{morillas@unsl.edu.ar}%

\author[A. M. Benavente]{A. M. Benavente}
\address{%
Instituto de Matem\'{a}tica Aplicada San Luis, UNSL-CONICET and Departamento de Matem\'{a}tica, FCFMyN, UNSL,\\ Ej\'{e}rcito de los Andes 950, 5700 San Luis, Argentina.}%
\email{abenaven@unsl.edu.ar}%

\author[M. I. Zakowicz]{M. I. Zakowicz}
\address{%
Departamento de Matem\'{a}tica, FCFMyN, UNSL,\\ Ej\'{e}rcito de los Andes 950, 5700 San Luis, Argentina.}%
\email{mzakowi@unsl.edu.ar}%

\keywords{Frames, Fusion Frames, Dual Fusion Frames, Gabor Systems}
\subjclass{Primary 42C15; Secondary 42C40, 46C99, 41A65}


\begin{abstract}
The definition of dual fusion frame presents technical problems
related to the domain of the synthesis operator. The notion commonly
used is the analogous to the canonical dual frame. Here a new
concept of dual is studied in infinite-dimensional separable Hilbert
spaces. It extends the commonly used notion and overcomes these
technical difficulties. We show that with this definition in many
cases dual fusion frames behave similar to dual frames. We present
examples of non-canonical dual fusion frames.
\end{abstract}

\maketitle

\section{Introduction}

\textit{Frames} appeared for the first time in the work of Duffin
and Schaeffer in \cite{Duffin-Schaeffer (1952)}. They are systems of
vectors in a separable Hilbert space $\mathcal{H}$ which are
redundant. This means, they allow representations of the elements of
the Hilbert space which are not necessarily unique. This property is
desirable for many situations that appear in applications e.g. in
signal processing when we have presence of noise, since they allow
more flexibility for choosing the adequate representation.  Other
areas where frames are used include coding theory, communication
theory and sampling theory. For a review of frame theory we refer to
\cite{Casazza (2000)} and the books \cite{Christensen (2003),
Daubechies (1992)}. Reference \cite{Casazza-Kutyniok (2013)}
comprises a description of recent developments in theory and
applications of finite frames.

When a huge amount of data has to be processed it is often
advantageous to subdivide a frame system into smaller subsystems and
combine locally data vectors. This gives rise to the concept of
\textit{fusion frames} (or \textit{frames of subspaces})
\cite{Casazza-Kutyniok (2004), Casazza-Kutyniok-Li (2008)} (see also
\cite[Chapter 13]{Casazza-Kutyniok (2013)}), which are a
generalization of frames. They allow decompositions of the elements
of $\mathcal{H}$ into weighted orthogonal projections onto closed
subspaces.

Many concepts of classical frame theory have been generalized to the
setting of fusion frames. Having a proper notion of \emph{dual
fusion frames} permits us to have different reconstruction
strategies. However in the definition appears a problem connected to
the domain of one of the fundamental operators associated to a
fusion frame, called the {\it synthesis operator}. Also, one wants
from a proper definition to yield duality results similar to the known for classical frames.

So far the most frequently used definition is the one that
corresponds to the canonical dual of the classical frames. In this paper the first author proposes a new concept of dual fusion frames. We study properties and provide examples of this new concept
that extends the ``canonical" one and solves the technical problem
mentioned before. Moreover, with this definition we obtain results
which are analogous to those valid for dual frames. In the present
paper we work in infinite-dimensional separable Hilbert spaces
whereas in \cite{Heineken-Morillas (2014)} the new definition is
studied in finite-dimensional Hilbert spaces and it is used to
obtain optimal reconstructions under erasures.

The paper is organized as follows. In Section 2 we review results
about frames and fusion frames. In Section 3 we present the new
definition of dual fusion frames. We then study the relation between
dual fusion frames and the left inverses of the analysis operator.
We present examples of dual fusion frames obtained using this
relation. We show that the ``canonical" dual fusion frame is a
particular case of our duals. We also consider the construction of
dual fusion frames from dual frames. We finally give examples of
non-canonical harmonic dual fusion frames constructed from Gabor
systems.

\section{Preliminaries}

We will now give a brief review about the concepts of frame and
fusion frame

Throughout this paper $\mathcal{H}$ will be a separable Hilbert
space and $I$ a countable index set.

\subsection{Frames and dual frames}

We begin defining the concept of frame.

\begin{defn}\label{D F}
Let $\{f_{i}\}_{i \in I} \subset \mathcal{H}$. Then $\{f_{i}\}_{i
\in I}$ is a \emph{frame}  for $\mathcal{H}$, if there exist
constants $0 < \alpha \leq \beta < \infty$ such that
\begin{equation}\label{E cond f}
\alpha\|f\|^{2} \leq \sum_{i \in I}|\langle f,f_{i}\rangle |^{2}
\leq \beta\|f\|^{2}  \text{ for all $f \in \mathcal{H}$.}
\end{equation}
\end{defn}

If the right inequality in (\ref{E cond f}) is satisfied,
$\{f_{i}\}_{i \in I}$ is a {\it Bessel sequence}. The constants
$\alpha$ and $\beta$ are called {\it frame bounds}. If
$\alpha=\beta,$ we call $\{f_{i}\}_{i \in I}$  an {\it
$\alpha$-tight frame}, and if $\alpha=\beta=1$ it is a {\it Parseval
frame}.

We associate to a Bessel sequence $\{f_{i}\}_{i \in I}$ the {\it
synthesis operator}

\centerline{$T:\ell^2(I)\rightarrow\mathcal{H},$ $T\{c_i\}_{i\in
I}=\sum_{i\in I}c_if_i,$}

\noindent and the {\it analysis operator}

\centerline{$T^*:\mathcal{H}\rightarrow \ell^2(I)$, $T^*f=\{\langle
f,f_i\rangle \}_{i\in I}.$}

For a frame $\{f_{i}\}_{i \in I}$ the operator

\centerline{$S=TT^*$,}

\noindent called {\it frame operator} is positive, selfadjoint and
invertible. Furthermore if $\{f_{i}\}_{i \in I}$ is an
$\alpha$-tight frame, then $S=\alpha I_{\mathcal{H}}$.

The concept of dual frame is defined as follows:

\begin{defn}
Let $\{f_{i}\}_{i \in I}$ be a frame for $\mathcal{H}$ with
synthesis operator $T$. A frame $\{g_{i}\}_{i \in I}$ for
$\mathcal{H}$ with synthesis operator $U$ is a {\em dual frame} of
$\{f_{i}\}_{i \in I}$ if the following reconstruction formula holds
\begin{equation}\label{E def dual marco sintesis}
f=UT^{*}f=\sum_{i \in I}\langle f,f_{i}\rangle g_{i} \text{ , for
all } f \in \mathcal{H}.
\end{equation}
\end{defn}

In particular, $\{S^{-1}f_{i}\}_{i \in I}$ is called the
\emph{canonical dual frame} of $\{f_{i}\}_{i \in I}$.

A {\it Riesz basis} for $\mathcal{H}$ is a frame for $\mathcal{H}$
which is also a basis. Observe that a Riesz basis has a unique dual,
the canonical one.

\subsection{Fusion frames}

Let $\{W_{i}\}_{i \in I}$ be a family of closed subspaces in
$\mathcal{H}$, and let $\{w_{i}\}_{i \in I}$ be a family of weights,
i.e., $w_{i}
> 0$ for all $i \in I$. We will denote $\{W_{i}\}_{i \in I}$ with $\mathcal{W}$, $\{w_{i}\}_{i
\in I}$ with $w$ and $\{(W_i,w_{i})\}_{i \in I}$ with
$(\mathcal{W},w)$. If $T \in L(\mathcal{H},\mathcal{K})$ we will
write $(T\mathcal{W},w)$ for $\{(TW_i, w_{i})\}_{i \in I}.$

We consider the Hilbert space

\centerline{$\mathcal{K}_{\mathcal{W}}:=\bigoplus_{i \in I}W_{i} =
\{\{f_{i}\}_{i \in I}:f_{i} \in W_{i} \text{ and } \{\|f_{i}\|\}_{i
\in I} \in \ell^{2}(I)\}$}

\noindent with inner product $\langle\{f_{i}\}_{i \in
I},\{g_{i}\}_{i \in I}\rangle=\sum_{i \in I}\langle f_{i},
g_{i}\rangle.$

For $V$ a closed subspace of $\mathcal{H}$, $\pi_{V}$ is the
orthogonal projection onto $V$.

\begin{defn}\label{D fusion frame}
We say that $(\mathcal{W},w)$ is a {\it fusion frame} for
$\mathcal{H}$, if there exist constants $0 < \alpha \leq \beta <
\infty$ such that
\begin{equation}\label{E cond ff}
\alpha\|f\|^{2} \leq \sum_{i \in I}w_{i}^{2}\|\pi_{W_{i}}(f)\|^{2}
\leq \beta\|f\|^{2}  \text{ for all $f \in \mathcal{H}$.}
\end{equation}
\end{defn}

We call $\alpha$ and $\beta$ the \textit{fusion frame bounds}. The
family $(\mathcal{W},w)$ is called an $\alpha$-\textit{tight fusion
frame}, if in (\ref{E cond ff}) the constants $\alpha$ and $\beta$
can be chosen so that $\alpha = \beta$, and a \textit{Parseval
fusion frame} provided that $\alpha = \beta = 1.$ If
$(\mathcal{W},w)$ possesses an upper fusion frame bound, but not
necessarily a lower bound, we call it a \textit{Bessel fusion
sequence}  with Bessel fusion bound $\beta.$ If $w_{i}=c$ for all
$i\in I,$ the collection $(\mathcal{W},w)$ is called
$c$-\textit{uniform}. In this case we write $(\mathcal{W},c)$. We
refer to a fusion frame $(\mathcal{W},1)$ as an \textit{orthonormal
fusion basis} if {$\mathcal{H}$ is the orthogonal sum of the
subspaces $W_{i}.$}

We associate to a Bessel fusion sequence $(\mathcal{W},w)$ the
following bounded operators:

\centerline{$T_{\mathcal{W},w} : \mathcal{K}_{\mathcal{W}}
\rightarrow \mathcal{H},$ $T_{\mathcal{W},w}\{f_{i}\}_{i \in
I}=\sum_{i \in
  I}w_{i}f_{i},$}

\noindent called the \textit{synthesis operator} and

\centerline{$T_{\mathcal{W},w}^{*} : \mathcal{H} \rightarrow
\mathcal{K}_{\mathcal{W}},$
$T_{\mathcal{W},w}^{*}f=\{w_{i}\pi_{W_{i}}(f)\}_{i \in I}$,}

\noindent named the \textit{analysis
  operator}.

If $(\mathcal{W},w)$ is a \textit{fusion frame} we have the {\it
fusion frame operator}

\centerline{ $
S_{\mathcal{W},w}=T_{\mathcal{W},w}T_{\mathcal{W},w}^{*},$}

\noindent which is positive, selfadjoint and invertible. If
$(\mathcal{W},w)$ is an $\alpha$-tight fusion frame, then
$S_{\mathcal{W},w}=\alpha I_{\mathcal{H}}.$

As for frames, $(\mathcal{W},w)$ is a Bessel fusion sequence for
$\mathcal{H}$ if and only if $T_{\mathcal{W},w}$ is a well defined
bounded linear operator. A Bessel fusion sequence $(\mathcal{W},w)$
is a fusion frame for $\mathcal{H}$ if and only if
$T_{\mathcal{W},w}$ is onto.

\begin{rem}
For $w \in \ell^{2}(I)$ it is easy to see that $(\mathcal{W},w)$ is
a Bessel fusion sequence for $\mathcal{H}$. Suppose that
$T_{\mathcal{W},w}$ is well defined. If $T_{\mathcal{W},w}$ is
bounded then $w \in \ell^{\infty}(I)$. In view of this, in the
sequel we suppose that each family of weights is in
$\ell^{\infty}(I)$.
\end{rem}


\section{Dual fusion frames}

Assume that $(\mathcal{W},w)$ is a fusion frame for $\mathcal{H}$.
In \cite[Definition 3.19]{Casazza-Kutyniok (2004)} the fusion frame
$(S_{\mathcal{W},w}^{-1}\mathcal{W},w)$ is called the \textit{dual
fusion frame} of $(\mathcal{W},w).$ This family looks as the
analogous to the canonical dual frame in the classical frame theory
and
\begin{equation}\label{E reconst dual fusion frame canonico}
f=S^{-1}_{\mathcal{W},w}S_{\mathcal{W},w}f=\sum_{j \in
I}w_{j}^{2}S_{\mathcal{W},w}^{-1}\pi_{W_{j}}(f) \text{ , for all } f
\in \mathcal{H}.
\end{equation}

If we like (\ref{E reconst dual fusion frame canonico}) expressed in
terms of operators as in (\ref{E def dual marco sintesis}), we find
the following obstacle. We have $R(T^{*}_{\mathcal{W},w}) \subseteq
\mathcal{K}_{\mathcal{W}}$ and
$D(T_{S^{-1}_{\mathcal{W},w}W,w})=\bigoplus_{i \in I}
S^{-1}_{\mathcal{W},w}W_{i}$, so
$T_{S_{\mathcal{W},w}^{-1}W,w}T^{*}_{\mathcal{W},w}$ is generally
not defined. This difficulty with the domain disappears with the
following definition of dual fusion frame.
\begin{defn}\label{D fusion frame dual}
Assume that $(\mathcal{W},w)$ and $(\mathcal{V},v)$ are fusion
frames for $\mathcal{H}$. If there exists $Q \in
L(\mathcal{K}_{\mathcal{W}},\mathcal{K}_{\mathcal{V}})$ such that
\begin{equation}\label{E TvQTw*=I}
T_{\mathcal{V},v}QT^{*}_{\mathcal{W},w}=I_{\mathcal{H}},
\end{equation}
then $(\mathcal{V},v)$ is a {\em dual fusion frame} of
$(\mathcal{W},w)$
\end{defn}

Sometimes we will write  {\it $Q$-dual fusion frame} in case we need
the operator $Q$ to be explicit.

The following lemma collects some properties of dual fusion frames
that are analogous to corresponding ones for dual frames with
similar proofs (see, e. g., \cite[Lemma 5.6.2]{Christensen (2003)}).

\begin{lem}\label{dualgral}
Let $(\mathcal{W},w)$ and $(\mathcal{V},v)$ be Bessel fusion
sequences for $\mathcal{H}$, and let $Q \in
L(\mathcal{K}_{\mathcal{W}},\mathcal{K}_{\mathcal{V}})$. Then the
following conditions are equivalent:
\begin{enumerate}
  \item $T_{\mathcal{V},v}QT^{*}_{\mathcal{W},w}=I_{\mathcal{H}}$.
  \item $T_{\mathcal{W},w}Q^{*}T^{*}_{\mathcal{V},v}=I_{\mathcal{H}}$.
  \item $T^{*}_{\mathcal{W},w}$ is injective, $T_{\mathcal{V},v}Q$ is surjective and
  $(T^{*}_{\mathcal{W},w}T_{\mathcal{V},v}Q)^{2}=T^{*}_{\mathcal{W},w}T_{\mathcal{V},v}Q$.
  \item $T^{*}_{\mathcal{V},v}$ is injective, $T_{\mathcal{W},w}Q^{*}$ is surjective and
  $(T^{*}_{\mathcal{V},v}T_{\mathcal{W},w}Q^{*})^{2}=T^{*}_{\mathcal{V},v}T_{\mathcal{W},w}Q^{*}$.
  \item $\langle f,g \rangle=\langle QT^{*}_{\mathcal{W},w}f,T_{\mathcal{V},v}^{*}g\rangle=\langle Q^{*}T^{*}_{\mathcal{V},v}f,T_{\mathcal{W},w}^{*}g\rangle$
  for all $f, g \in \mathcal{H}$.
\end{enumerate}
In case any of these equivalent conditions are satisfied,
$(\mathcal{W},w)$ and $(\mathcal{V},v)$ are fusion frames for
$\mathcal{H}$, $(\mathcal{V},v)$ is a $Q$-dual fusion frame of
$(\mathcal{W},w)$, and $(\mathcal{W},w)$ is a $Q^{*}$-dual fusion
frame of $(\mathcal{V},v)$.
\end{lem}

Note that the equivalence of conditions (1) and (2) implies that the
roles of $(\mathcal{W},w)$ and $(\mathcal{V},v)$ can be interchanged
in the definition of dual fusion frame. Conditions (3) and (4) say
that the $Q$-mixed Gram operator is a projection if
$(\mathcal{W},w)$ and $(\mathcal{V},v)$ are dual fusion frames.
Finally, condition (5) expresses the inner product of two elements
of $\mathcal{H}$ in terms of a $Q$-inner product of their images
under the analysis operators of $(\mathcal{W},w)$ and
$(\mathcal{V},v).$

\subsection{Dual fusion frames obtained from left inverses of the analysis
operator}

In classical frame theory, dual frames are related to the left
inverses of the analysis operator. An analogous result is valid for
the following special type of dual fusion frame.
\begin{defn}Let $p_{i}: \mathcal{K}_{\mathcal{W}} \rightarrow \mathcal{K}_{\mathcal{W}}$,
$p_{i}\{f_{j}\}_{j \in I}=\{\delta_{i,j}f_{j}\}_{j \in I}$. If $Q$
in Definition~\ref{D fusion frame dual} satisfies
$$Qp_{i}\mathcal{K}_{\mathcal{W}}=p_{i}\mathcal{K}_{\mathcal{V}},$$ we say that $Q$ is {\em component preserving} and
$(\mathcal{V},v)$ is a \emph{component preserving dual fusion frame}
of $(\mathcal{W},w)$.
\end{defn}

\noindent \textbf{Notation.} We denote the set of bounded left
inverses of $T^{*}_{\mathcal{W},w}$ with
$\mathfrak{L}_{T^{*}_{\mathcal{W},w}}$.

\smallskip

The next two lemmata link component preserving dual fusion frames of
$(\mathcal{W},w)$ with the left inverses of the analysis operator
$T^{*}_{\mathcal{W},w}$. They are analogous to a corresponding
result for dual frames (see, e. g., \cite[Lemma 5.6.3.]{Christensen
(2003)}).

\begin{lem}\label{L V,v dual fusion frame entonces Vi=ApiWj}
Let $(\mathcal{W},w)$ be a fusion frame for $\mathcal{H}$. If
$(\mathcal{V},v)$ is a component preserving dual fusion frame of
$(\mathcal{W},w)$ then $V_{i}=Ap_{i}\mathcal{K}_{\mathcal{W}}$, for
each $i \in I$, where $A \in \mathfrak{L}_{T^{*}_{\mathcal{W},w}}$.
\end{lem}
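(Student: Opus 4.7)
The plan is to take the operator $Q$ from the definition of dual fusion frame and simply read off $A := T_{\textbf{V},{\bf v}}Q$ as a candidate left inverse. First I would verify that $A \in \mathfrak{L}_{T^{*}_{\textbf{W},{\bf w}}}$: by Definition~\ref{D fusion frame dual} we have $T_{\textbf{V},{\bf v}}QT^{*}_{\textbf{W},{\bf w}} = I_{\mathcal{H}}$, so $AT^{*}_{\textbf{W},{\bf w}} = I_{\mathcal{H}}$, and $A$ is bounded as the composition of two bounded operators.

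Next I would establish the equality $V_i = Ap_i\mathcal{W}$ by a two-step computation. Using that $Q$ is component preserving,
\begin{equation*}
Ap_i\mathcal{W} = T_{\textbf{V},{\bf v}}Qp_i\mathcal{W} = T_{\textbf{V},{\bf v}}p_i\mathcal{V}.
\end{equation*}
It then suffices to identify $T_{\textbf{V},{\bf v}}p_i\mathcal{V}$ with $V_i$. An element of $p_i\mathcal{V}$ is a sequence $\{g_j\}_{j \in I}$ with $g_j = 0$ for $j \neq i$ and $g_i \in V_i$ arbitrary; applying the synthesis operator gives $\sum_{j \in I} v_j g_j = v_i g_i$. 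Since $v_i > 0$, as $g_i$ ranges over $V_i$ the vector $v_i g_i$ also ranges over all of $V_i$, which yields $T_{\textbf{V},{\bf v}}p_i\mathcal{V} = V_i$.

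Combining these two identities gives $V_i = Ap_i\mathcal{W}$ with $A \in \mathfrak{L}_{T^{*}_{\textbf{W},{\bf w}}}$, as required. There is no real obstacle here: the whole proof is essentially a matter of unwinding definitions. The only point that needs a moment of care is checking that the restriction of $T_{\textbf{V},{\bf v}}$ to the ``$i$-th slot'' $p_i\mathcal{V}$ is exactly (scalar multiples of) the identification with $V_i$, and that the positivity of $v_i$ prevents any shrinkage of the image.
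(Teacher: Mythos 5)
Your proposal is correct and follows exactly the paper's own argument: set $A=T_{\textbf{V},{\bf v}}Q$, note it is a bounded left inverse of $T^{*}_{\textbf{W},{\bf w}}$, and use the component preserving property to get $Ap_i\mathcal{W}=T_{\textbf{V},{\bf v}}p_i\mathcal{V}=V_i$. The only difference is that you spell out the final identification $T_{\textbf{V},{\bf v}}p_i\mathcal{V}=V_i$ (via positivity of $v_i$), which the paper states without comment.
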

\begin{proof}
Let $Q\in L(\mathcal{K}_{\mathcal{W}},\mathcal{K}_{\mathcal{V}})$ be
component preserving such that
$T_{\mathcal{V},v}QT^{*}_{\mathcal{W},w}=I_{\mathcal{H}}$. Let
$A=T_{\mathcal{V},v}Q$. Clearly, $A \in
\mathfrak{L}_{T^{*}_{\mathcal{W},w}}$. Since $Q$ is component
preserving,
$Ap_{i}\mathcal{K}_{\mathcal{W}}=T_{\mathcal{V},v}Qp_{i}\mathcal{K}_{\mathcal{W}}=T_{\mathcal{V},v}p_{i}\mathcal{K}_{\mathcal{V}}=V_{i}.$\end{proof}

Fusion frames behave differently under operators than classical
frames do (see e.g. \cite{Casazza-Kutyniok-Li (2008), Ruiz-Stojanoff
(2008)}). A well known result in classical frame theory is that if
$A \in L(\mathcal{K}, \mathcal{H})$ is surjective and
$\set{e_{i}}_{i \in I}$ is an orthonormal basis for $\mathcal{K}$,
then $\set{Ae_{i}}_{i \in I}$ is a frame for $\mathcal{H}$. In the
context of fusion frames the situation is different. Given an
orthonormal fusion basis $(\mathcal{W},1)$  of $\mathcal{K},$ a
surjective $A \in L(\mathcal{K}, \mathcal{H})$ and a family of
weights $v,$ the collection $(A\mathcal{W},v)$ could even not be a
Bessel sequence for $\mathcal{H}$. This fact does not allow to have
the complete converse of the previous result in an
infinite-dimensional separable Hilbert space. However, a reciprocal
is valid in the following sense:

\begin{lem}\label{L Vi=ApiWj entonces V,v dual fusion frame}
Let $(\mathcal{W},w)$ be a fusion frame for $\mathcal{H}$, $A \in
\mathfrak{L}_{T^{*}_{\mathcal{W},w}}$ and
$V_{i}=Ap_{i}\mathcal{K}_{\mathcal{W}}$ for each $i \in I$. If
$(\mathcal{V},v)$ is a Bessel fusion sequence and

\centerline{$Q_{A,v}: \mathcal{K}_{\mathcal{W}} \rightarrow
\mathcal{K}_{\mathcal{V}}, ~~Q_{A,v}\{f_{j}\}_{j \in
I}=\{\frac{1}{v_{i}}Ap_{i}\{f_{j}\}_{j \in I}\}_{i \in I}$}

\noindent is a well defined bounded operator, then $(\mathcal{V},v)$
is a $Q_{A,v}$-component preserving dual fusion frame of
$(\mathcal{W},w)$.
\end{lem}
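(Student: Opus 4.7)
The plan is to verify two things: the duality identity $T_{\textbf{V},{\bf v}} Q_{A,\mathbf{v}} T^{*}_{\textbf{W},{\bf w}} = I_{\mathcal{H}}$, and that $Q_{A,\mathbf{v}}$ is component preserving. Once the duality identity is established, Lemma~\ref{dualgral} will automatically upgrade the Bessel fusion sequence $(\textbf{V},{\bf v})$ to a fusion frame, so no separate frame argument is needed.

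The cleanest route is first to prove the operator identity $T_{\textbf{V},{\bf v}} Q_{A,\mathbf{v}} = A$ on $\mathcal{W}$. Unfolding the definitions, for any $\{g_j\}_{j \in I} \in \mathcal{W}$ the weight $v_i$ cancels and one obtains
$$T_{\textbf{V},{\bf v}} Q_{A,\mathbf{v}} \{g_j\}_{j \in I} = \sum_{i \in I} v_i \cdot \frac{1}{v_i} A p_i \{g_j\}_{j \in I} = \sum_{i \in I} A p_i \{g_j\}_{j \in I}.$$
The projections $\{p_i\}_{i \in I}$ form an orthogonal resolution of the identity on the $\ell^{2}$-direct sum $\mathcal{W}$, so the partial sums $\sum_{i \in F} p_i \{g_j\}_{j \in I}$ converge in norm to $\{g_j\}_{j \in I}$ as $F \nearrow I$. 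Since $A$ is bounded, I can pull it outside the sum and conclude $T_{\textbf{V},{\bf v}} Q_{A,\mathbf{v}} = A$. Postcomposing with $T^{*}_{\textbf{W},{\bf w}}$ and using the left-inverse hypothesis $A T^{*}_{\textbf{W},{\bf w}} = I_{\mathcal{H}}$ gives the duality identity.

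For the component-preserving property $Q_{A,\mathbf{v}} p_i \mathcal{W} = p_i \mathcal{V}$, both inclusions are direct from the definitions. For $\subseteq$: an element $\{f_j\}_{j \in I} \in p_i \mathcal{W}$ has $p_k \{f_j\}_{j \in I} = 0$ for $k \neq i$, so $Q_{A,\mathbf{v}} \{f_j\}_{j \in I}$ is zero outside index $i$, and its $i$-th entry is $\frac{1}{v_i} A p_i \{f_j\}_{j \in I} \in V_i$ by the defining relation $V_i = A p_i \mathcal{W}$. For $\supseteq$: given $g \in p_i \mathcal{V}$ with $i$-th entry $g_i \in V_i$, the fact that $v_i g_i \in V_i = A p_i \mathcal{W}$ produces some $h \in p_i \mathcal{W}$ with $A h = A p_i h = v_i g_i$, and then a quick check of each component shows $Q_{A,\mathbf{v}} h = g$.

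The only real subtlety is the convergence step---swapping $A$ with the infinite sum in the intermediate identity---but this is immediate from the boundedness of $A$ together with the $\ell^{2}$-direct-sum structure of $\mathcal{W}$. Everything else amounts to bookkeeping with the definitions and the hypothesis $A \in \mathfrak{L}_{T^{*}_{\textbf{W},{\bf w}}}$.
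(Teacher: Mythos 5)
Your proposal is correct and follows essentially the same route as the paper: the identity $T_{\textbf{V},{\bf v}}Q_{A,\mathbf{v}}=A$ via cancellation of the weights and the resolution of the identity $\sum_{i}p_{i}$, combined with a two-inclusion verification that $Q_{A,\mathbf{v}}p_{i}\mathcal{W}=p_{i}\mathcal{V}$. The only differences are cosmetic (order of the two steps, and your more explicit justification of pulling the bounded operator $A$ through the infinite sum), so nothing further is needed.
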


\begin{proof}
Let $\{f_{j}\}_{j \in I} \in \mathcal{W}$. We have
$Ap_{i}p_{i_{0}}\{f_{j}\}_{j \in I}
=\delta_{i,i_{0}}Ap_{i_{0}}\{f_{j}\}_{j \in I}\in
Ap_{i_{0}}\mathcal{K}_{\mathcal{W}}\\=V_{i_{0}}.$ Therefore,
$Q_{A,v}p_{i_{0}}\{f_{j}\}_{j \in
I}=\{\delta_{i,i_{0}}\frac{1}{v_{i_{0}}}Ap_{i_{0}}\{f_{j}\}_{j \in
I}\}_{i \in I} \in p_{i_{0}}\mathcal{K}_{\mathcal{V}}.$
Consequently, $Q_{A,v}p_{i_{0}}\mathcal{K}_{\mathcal{W}} \subseteq
p_{i_{0}}\mathcal{K}_{\mathcal{V}}.$ For the other inclusion, let
$\{g_{i}\}_{i \in I} \in \mathcal{K}_{\mathcal{V}}$. Then
$g_{i}=Ap_{i}\{f_{j}^{i}\}_{j \in I}$ with $\{f_{j}^{i}\}_{j \in I}
\in \mathcal{K}_{\mathcal{W}}$ for each $i \in I,$ and thus
$p_{i_{0}}\{g_{i}\}_{i \in
I}=\{\delta_{i,i_{0}}Ap_{i_{0}}\{f_{j}^{i_{0}}\}_{j \in I}\}_{i \in
I} = \{\delta_{i,i_{0}}A\{\delta_{j,i_{0}}f_{i_{0}}^{i_{0}}\}_{j \in
I}\}_{i \in I}.$ Since
$v_{i_{0}}\{\delta_{j,i_{0}}f_{i_{0}}^{i_{0}}\}_{j \in I} \in
p_{i_{0}}\mathcal{K}_{\mathcal{W}}$ and
$Ap_{i}v_{i_{0}}\{\delta_{j,i_{0}}f_{i_{0}}^{i_{0}}\}_{j \in
I}=\delta_{i,i_{0}}v_{i_{0}}A\{\delta_{j,i_{0}}f_{i_{0}}^{i_{0}}\}_{j
\in I}$, then $p_{i_{0}}\{g_{i}\}_{i \in
I}=Q_{A,v}v_{i_{0}}\{\delta_{j,i_{0}}f_{i_{0}}^{i_{0}}\}_{j \in I}.$
So, $p_{i_{0}}\mathcal{K}_{\mathcal{V}} \subseteq
Q_{A,v}p_{i_{0}}\mathcal{K}_{\mathcal{W}}.$ This shows that
$Q_{A,v}$ is component preserving.

Since $(\mathcal{V},v)$ is a Bessel fusion sequence,
$T_{\mathcal{V},v}$ is a well defined bounded linear operator. If
$\{f_{j}\}_{j \in I} \in \mathcal{K}_{\mathcal{W}},$ then

\centerline{$T_{\mathcal{V},v}Q_{A,v}\{f_{j}\}_{j \in I}= \sum_{i
\in I}v_{i}\frac{1}{v_{i}}Ap_{i}\{f_j\}_{j \in I}= A\sum_{i \in
I}p_{i}\{f_j\}_{j \in I}= A\{f_j\}_{j \in I}.$} \noindent  Hence
$T_{\mathcal{V},v}Q_{A,v}=A \in
\mathfrak{L}_{T^{*}_{\mathcal{W},w}}$. So $(\mathcal{V},v)$ is a
$Q_{A,v}$-component preserving dual fusion frame of
$(\mathcal{W},w)$.
\end{proof}
\begin{rem}\label{R V Bessel Q acotada}
Let $A$, $(\mathcal{V},v)$ and $Q_{A,v}$ as in Lemma~\ref{L Vi=ApiWj
entonces V,v dual fusion frame}. We can give the following
sufficient conditions for $(\mathcal{V},v)$ being a Bessel fusion
sequence and for $Q_{A,v}$ being a well defined bounded operator:

(1) Let $\gamma(A)$ be the reduced minimum modulus of $A$, i.e.,
$\gamma(A)=\text{inf}\{\|Ax\|:\|x\|=1, x \in N(A)^{\perp}\}.$ Assume
$\gamma(Ap_{i}) > 0$ and there exists $\delta > 0$ such that $\delta
\leq v_{i}^{-2}\gamma(Ap_{i})^{2}$ for all $i \in I.$ Since
$\{(p_{i}\mathcal{K}_{\mathcal{W}},1)\}_{i \in I}$ is an orthonormal
fusion basis for $\mathcal{K}_{\mathcal{W}}$, by \cite[Theorem
3.6]{Ruiz-Stojanoff (2008)} $(\mathcal{V},v)$ is a Bessel fusion
sequence for $\mathcal{H}$ with upper bound
$\frac{\|A\|^{2}}{\delta}$.

(2) If $v_{i} > \delta > 0$ for each $i \in I$, then $Q_{A,v}$ is a
well defined bounded operator with $\|Q_{A,v}\| \leq
\frac{\|A\|}{\delta}$. To see this, note that if $\{f_{j}\}_{j \in
I} \in \mathcal{K}_{\mathcal{W}}$, then

\centerline{$\sum_{i \in I}\|\frac{1}{v_{i}}Ap_{i}\{f_{j}\}_{j \in
I}\|\leq \frac{\|A\|^{2}}{\delta^{2}}\sum_{i \in
I}\|p_{i}\{f_{j}\}_{j \in
I}\|^{2}=\frac{\|A\|^{2}}{\delta^{2}}\|\{f_{i}\}_{i \in I}\|^{2} .$}
\end{rem}
\begin{example}\label{Ej dual canonico}
Any reconstruction formula of the form $f= A T^*_{\mathcal{W},w} f $
involves a left inverse $A$ of $T^*_{\mathcal{W},w}$, and then, in
view of Lemma~\ref{L Vi=ApiWj entonces V,v dual fusion frame}, it
could be expressed in terms of a $Q_{A,v}$-component preserving dual
of $(\mathcal{W},w)$. The present example is based on this
observation.

Since
$(S_{\mathcal{W},w}^{-1}T_{\mathcal{W},w})T^{*}_{\mathcal{W},w}=I_{\mathcal{H}}$,
then $A=S_{\mathcal{W},w}^{-1}T_{\mathcal{W},w} \in
\mathfrak{L}_{T^{*}_{\mathcal{W},w}}$. We have

\centerline{$A p_i
\mathcal{K}_{\mathcal{W}}=S_{\mathcal{W},w}^{-1}W_i, ~~\forall i\in
I$}

\noindent and that

\centerline{$Q_{A,w}  : \mathcal{K}_{\mathcal{W}} \rightarrow
\bigoplus_{i \in I} S_{\mathcal{W},w}^{-1}W_{i}, ~~Q_{A,w}
\{f_{j}\}_{j \in I}= \{S_{\mathcal{W},w}^{-1}f_{i}\}_{i \in I}$}

\noindent is a well defined bounded operator with $\|Q_{A,w}\| \leq
\|S_{\mathcal{W},w}^{-1}\|$. So, by Lemma~\ref{L Vi=ApiWj entonces
V,v dual fusion frame}, the dual fusion frame introduced in
\cite{Casazza-Kutyniok (2004)},
$(S_{\mathcal{W},w}^{-1}\mathcal{W},w),$ is a component preserving
dual fusion frame of $(\mathcal{W},w)$.

Note that in this example $Q_{A,w}$ is bounded without any
additional restriction on the weights $w_{i}$.
\end{example}
In the sequel we refer to $(S_{\mathcal{W},w}^{-1}\mathcal{W},w)$ as
the canonical dual and to

\centerline{$Q_{S_{\mathcal{W},w}^{-1}T_{\mathcal{W},w},w}^{*}T_{S_{\mathcal{W},w}^{-1}\mathcal{W},w}^{*}f=T_{\mathcal{W},w}^{*}S_{\mathcal{W},w}^{-1}f
\,\in \mathcal{K}_{\mathcal{W}}$}

\noindent as the \textit{fusion frame coefficients} of $f \in
\mathcal{H}$.

The next lemma is about the minimality of the fusion frame
coefficients and has its analogous in classical frame theory with a
similar proof (see e.g. \cite[Lemma 5.4.2]{Christensen (2003)}).

\begin{lem}
Let $(\mathcal{W},w)$ be a fusion frame for $\mathcal{H}$ and $f \in
\mathcal{H}$. For all $\{f_{j}\}_{j \in I} \in
\mathcal{K}_{\mathcal{W}}$ satisfying $T_{\mathcal{W},w}\{f_{j}\}_{j
\in I}=f$ we have $\|T_{\mathcal{W},w}^{*}S_{\mathcal{W},w}^{-1}f\|
\leq \|\{f_{j}\}_{j \in I}\|$.
\end{lem}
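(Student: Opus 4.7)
The plan is to mimic the classical frame-theoretic argument: decompose any representing sequence $\{f_j\}_{j\in I}$ as the sum of the putative minimal one plus an element of $\ker T_{\mathbf{W},\mathbf{w}}$, and then apply the Pythagorean theorem using the fact that $R(T^{*}_{\mathbf{W},\mathbf{w}}) \perp \ker T_{\mathbf{W},\mathbf{w}}$.

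Concretely, I would first observe that $\xi := T_{\mathbf{W},\mathbf{w}}^{*}S_{\mathbf{W},\mathbf{w}}^{-1}f$ itself satisfies $T_{\mathbf{W},\mathbf{w}}\xi = T_{\mathbf{W},\mathbf{w}}T_{\mathbf{W},\mathbf{w}}^{*}S_{\mathbf{W},\mathbf{w}}^{-1}f = S_{\mathbf{W},\mathbf{w}}S_{\mathbf{W},\mathbf{w}}^{-1}f = f$, so $\xi$ is a valid candidate. Given any other $\{f_j\}_{j\in I}\in\mathcal{W}$ with $T_{\mathbf{W},\mathbf{w}}\{f_j\}_{j\in I}=f$, write
\[
\{f_j\}_{j\in I} \;=\; \xi \;+\; \bigl(\{f_j\}_{j\in I}-\xi\bigr).
\]
Applying $T_{\mathbf{W},\mathbf{w}}$ to the difference gives $f-f=0$, so $\{f_j\}_{j\in I}-\xi \in \ker T_{\mathbf{W},\mathbf{w}}$.

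Next, I would invoke the standard Hilbert-space identity $\ker T_{\mathbf{W},\mathbf{w}} = R(T^{*}_{\mathbf{W},\mathbf{w}})^{\perp}$ (valid here because $T_{\mathbf{W},\mathbf{w}}$ is a bounded linear operator between Hilbert spaces). Since $\xi \in R(T^{*}_{\mathbf{W},\mathbf{w}})$, the two summands in the decomposition above are orthogonal in $\mathcal{W}$, and the Pythagorean theorem yields
\[
\|\{f_j\}_{j\in I}\|^{2} \;=\; \|\xi\|^{2} + \|\{f_j\}_{j\in I}-\xi\|^{2} \;\geq\; \|\xi\|^{2},
\]
which is the desired inequality.

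There is essentially no obstacle here beyond correctly identifying the orthogonal decomposition; the only subtlety is to note that the relation $\ker T = R(T^{*})^{\perp}$ does not require closedness of $R(T^{*})$, so this step is automatic for the bounded operator $T_{\mathbf{W},\mathbf{w}}:\mathcal{W}\to\mathcal{H}$. No control of the fusion frame bounds enters the argument; the fusion frame hypothesis is used only to guarantee that $S_{\mathbf{W},\mathbf{w}}$ is invertible so that $\xi$ is well defined.
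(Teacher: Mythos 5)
Your proof is correct and follows exactly the classical argument that the paper invokes by reference to \cite[Lemma 5.4.2]{Christensen (2003)}: decompose an arbitrary representing sequence as the fusion frame coefficients $\xi=T_{\textbf{W},{\bf w}}^{*}S_{\textbf{W},{\bf w}}^{-1}f$ plus an element of $\ker T_{\textbf{W},{\bf w}}=R(T_{\textbf{W},{\bf w}}^{*})^{\perp}$ and apply the Pythagorean identity. No discrepancies to report.
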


\begin{example}\label{ejemploA}
Assume $\{e_{j}\}_{j=1}^{\infty}$ is an orthonormal basis of
$\mathcal{H}$. Fix $N \in \mathbb{N}$ and define
$W_{j}=\overline{\text{span}}\{e_{1}, \ldots, e_{j}\}$ for $1 \leq j
\leq N$ and $W_{j}=\overline{\text{span}}\{e_{j-N+1},\ldots,
e_{j}\}$ for $j > N$. Then $(\mathcal{W},1)$ is a $1-$uniform
$N$-tight fusion frame for $\mathcal{H}$.

To simplify the exposition we consider in the sequel $N=3$ and
$e_{-1}=e_{0}=0$. In this case
$\mathcal{K}_{\mathcal{W}}=\{\{\sum_{k=0}^{2}c_{j,j-k}e_{j-k}\}_{j=1}^{\infty}:\sum_{j=1}^{\infty}\sum_{k=0}^{2}|c_{j,j-k}|^{2}<\infty\}$
and $T_{W,1}^{*}:\mathcal{H}\rightarrow \mathcal{K}_{\mathcal{W}}
\text{ ,
}T_{W,1}^{*}\sum_{j=1}^{\infty}c_{j}e_{j}=\{\sum_{k=0}^{2}c_{j-k}e_{j-k}\}_{j=1}^{\infty}.$

Consider $A:\mathcal{K}_{\mathcal{W}}\rightarrow \mathcal{H}$,
$A\{\sum_{k=0}^{2}c_{j,j-k}e_{j-k}\}_{j=1}^{\infty}=\sum_{j=1}^{\infty}c_{j,j}e_{j}.$
Then $A \in \mathfrak{L}_{T^{*}_{\mathcal{W},1}}$,
$V_{i}=Ap_{i}\mathcal{K}_{\mathcal{W}}=\overline{\text{span}}\{e_{i}\},$
$(\mathcal{V},1)$ is a $1-$uniform orthonormal fusion basis for
$\mathcal{H}$ and $Q_{A,1}: \mathcal{K}_{\mathcal{W}} \rightarrow
\mathcal{K}_{\mathcal{V}}, ~
Q_{A,1}\{\sum_{k=0}^{2}c_{j,j-k}e_{j-k}\}_{j=1}^{\infty}=\{c_{i,i}e_{i}\}_{i=1}^{\infty}$
is bounded.

Note that since $ (\mathcal{V},1)$ is an orthonormal fusion basis
for $\mathcal{H},$ it coincides with its unique component preserving
dual fusion frame. On the other hand, by Lemma~\ref{dualgral},
$(\mathcal{W},1)$ gives an example of dual of $(\mathcal{V},1)$,
which is not component preserving.
\end{example}

\begin{rem}\label{noBFS}
The following shows that in Lemma~\ref{L Vi=ApiWj entonces V,v dual
fusion frame}, the hypotheses $(\mathcal{V},v)$ to be a Bessel
fusion sequence and $Q_{A,v}$ to be bounded, cannot be avoided:

(1) Let $\{e_n\}_{n\in\N}$ be an orthonormal basis of a Hilbert
space $\mathcal{H}$ and consider
$W_1=\overline{\text{span}}\{e_k:k\geq 2\}=\{e_1\}^{\bot}$ and
$W_k=\overline{\text{span}}\{e_1,e_k\}$ for $k\geq 2.$

It is proved in \cite[Example 7.5]{Ruiz-Stojanoff (2008)} that if
$(\mathcal{W},w)$ is a Bessel fusion sequence, then $w\in
\ell^{2}(\N).$  Moreover, the frame operator $S_{\mathcal{W},w}$ is
diagonal with respect to $\{e_n\}_{n\in\N}$ and so it is also
$S_{\mathcal{W},w}^{-1}.$

Now, in particular, this implies that
$S_{\mathcal{W},w}^{-1}W_k=W_k$ for all $k\in \N.$ So if
$A=S_{\mathcal{W},w}^{-1}T_{\mathcal{W},w}$ then $V_k:=A p_k
\mathcal{K}_{\mathcal{W}}= S_{\mathcal{W},w}^{-1}W_k.$ Thus, if $v
\in \ell^{\infty}(\N) \setminus \ell^{2}(\N),$ then
$(\mathcal{V},v)$ is not a Bessel fusion sequence.

(2) There exist weights $v_{i}$ so that
$Q_{S_{\mathcal{W},w}^{-1}T_{\mathcal{W},w},v}$ is unbounded.
Specifically, let $v_{i} \leq \frac{w_{i}}{i\|S_{\mathcal{W},w}\|}$.
If $\{f_{j}^{(i)}\}_{j\in I} \in p_{i}\mathcal{K}_{\mathcal{W}}$,
then
\begin{align*}
\|Q_{S_{\mathcal{W},w}^{-1}T_{\mathcal{W},w},v}\{f_{j}^{(i)}\}_{j\in
I}\|&=\frac{w_{i}}{v_{i}}\|S_{\mathcal{W},w}^{-1}f_{i}^{(i)}\|
\geq \frac{w_{i}}{v_{i}\|S_{\mathcal{W},w}\|}\|f_{i}^{(i)}\|\\
&= \frac{w_{i}}{v_{i}\|S_{\mathcal{W},w}\|} \|\{f_{j}^{(i)}\}_{j\in
I}\|\geq i \|\{f_{j}^{(i)}\}_{j\in I}\|.
\end{align*}
\end{rem}

Let $(\mathcal{W},w)$ be a fusion frame for $\mathcal{H}.$ It can be
seen as in \cite[Lemma~5.6.4]{Christensen (2003)}, that the bounded
left inverses of $T_{\mathcal{W},w}^{*}$ are the operators $A$ of
the form
$$A=S_{\mathcal{W},w}^{-1}T_{\mathcal{W},w}+R\paren{I_{\mathcal{K}_{\mathcal{W}}}-T_{\mathcal{W},w}^{*}S_{\mathcal{W},w}^{-1}T_{\mathcal{W},w}},$$
\noindent where $R\in L(\mathcal{K}_{\mathcal{W}},\mathcal{H})$.
This fact, Lemma~\ref{L V,v dual fusion frame entonces Vi=ApiWj} and
Remark~\ref{R V Bessel Q acotada} yield the following description
for component preserving dual fusion frames:

\begin{thm}
Let $(\mathcal{W},w)$  be a fusion frame for $\mathcal{H}$. Suppose
that $v_{i} > \delta > 0$ for each $i \in I$. Then the component
preserving dual fusion frames of $(\mathcal{W},w)$ are the Bessel
fusion sequences $(\mathcal{V},v)$ where

\centerline{$V_{i}=\left[S_{\mathcal{W},w}^{-1}T_{\mathcal{W},w}+R\paren{I_{\mathcal{K}_{\mathcal{W}}}-T_{\mathcal{W},w}^{*}S_{\mathcal{W},w}^{-1}T_{\mathcal{W},w}}\right]\paren{p_{i}\mathcal{K}_{\mathcal{W}}}$}

\noindent and $R\in L(\mathcal{K}_{\mathcal{W}},\mathcal{H}).$
\end{thm}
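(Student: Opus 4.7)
The plan is to prove both inclusions by assembling three ingredients already available: the characterization of bounded left inverses of $T_{\mathbf{W},\mathbf{w}}^{*}$ mentioned just before the theorem, Lemma~\ref{L V,v dual fusion frame entonces Vi=ApiWj}, and Lemma~\ref{L Vi=ApiWj entonces V,v dual fusion frame} together with Remark~\ref{R V Bessel Q acotada}(2). The uniform lower bound $v_{i}>\delta>0$ on the weights is exactly what is needed to pass from ``$V_{i}=Ap_{i}\mathcal{W}$'' to ``$Q_{A,\mathbf{v}}$ bounded''.

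For the \emph{necessity} direction, I would start by letting $(\textbf{V},\mathbf{v})$ be a component preserving dual fusion frame of $(\textbf{W},\mathbf{w})$. In particular $(\textbf{V},\mathbf{v})$ is a fusion frame, hence a Bessel fusion sequence. By Lemma~\ref{L V,v dual fusion frame entonces Vi=ApiWj} there exists $A\in\mathfrak{L}_{T^{*}_{\textbf{W},{\bf w}}}$ with $V_{i}=Ap_{i}\mathcal{W}$ for each $i\in I$. Applying the cited description of bounded left inverses of $T^{*}_{\mathbf{W},\mathbf{w}}$, we may write
\[
A=S_{\mathbf{W},\mathbf{w}}^{-1}T_{\mathbf{W},\mathbf{w}}+R\paren{I_{\mathcal{W}}-T_{\mathbf{W},\mathbf{w}}^{*}S_{\mathbf{W},\mathbf{w}}^{-1}T_{\mathbf{W},\mathbf{w}}}
\]
for some $R\in L(\mathcal{W},\mathcal{H})$, which gives the required form of $V_{i}$.

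For the \emph{sufficiency} direction, I would take a Bessel fusion sequence $(\textbf{V},\mathbf{v})$ with $V_{i}$ of the stated form and verify its hypotheses against Lemma~\ref{L Vi=ApiWj entonces V,v dual fusion frame}. Setting $A=S_{\mathbf{W},\mathbf{w}}^{-1}T_{\mathbf{W},\mathbf{w}}+R(I_{\mathcal{W}}-T_{\mathbf{W},\mathbf{w}}^{*}S_{\mathbf{W},\mathbf{w}}^{-1}T_{\mathbf{W},\mathbf{w}})$, the characterization of left inverses ensures $A\in\mathfrak{L}_{T^{*}_{\textbf{W},{\bf w}}}$, and by construction $V_{i}=Ap_{i}\mathcal{W}$. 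Since the weights satisfy $v_{i}>\delta>0$, Remark~\ref{R V Bessel Q acotada}(2) guarantees that $Q_{A,\mathbf{v}}$ is a well defined bounded operator (with $\|Q_{A,\mathbf{v}}\|\leq \|A\|/\delta$). Lemma~\ref{L Vi=ApiWj entonces V,v dual fusion frame} then yields that $(\textbf{V},\mathbf{v})$ is a $Q_{A,\mathbf{v}}$-component preserving dual fusion frame of $(\textbf{W},\mathbf{w})$.

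There is no real obstacle here: every nontrivial ingredient has already been assembled in the previous lemmas and remark. The only point worth highlighting in the write-up is that the lower-bound condition on $\mathbf{v}$ is used exclusively to invoke Remark~\ref{R V Bessel Q acotada}(2), while the Bessel hypothesis on $(\textbf{V},\mathbf{v})$ is imposed directly because, as Remark~\ref{noBFS}(1) shows, it cannot be derived for free from the form of $V_{i}$.
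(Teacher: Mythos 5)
Your proposal is correct and follows essentially the same route as the paper, which offers no separate proof but states that the theorem follows from the characterization of bounded left inverses of $T_{\mathbf{W},\mathbf{w}}^{*}$, Lemma~\ref{L V,v dual fusion frame entonces Vi=ApiWj} and Remark~\ref{R V Bessel Q acotada}. You merely make explicit the role of Lemma~\ref{L Vi=ApiWj entonces V,v dual fusion frame} in the sufficiency direction, which the paper leaves implicit.
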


\subsection{Dual fusion frames obtained from dual frames}

The following theorem provides a method to obtain dual fusion
frames.

First recall that if $\{f_{i}\}_{i \in I}\subset \mathcal{H}$ is a
Bessel sequence with bound $\beta$, then
\begin{equation}\label{E cond B1}
\|\sum_{i \in I}c_{i}f_{i}\|^{2} \leq \beta\|c\|^{2} \text{ for all
$c \in \ell^{2}(I)$.}
\end{equation}

\begin{thm}\label{T fusion frame system dual}
For each $i \in I$, let $w_{i} > 0$, $v_{i} > 0$, and let $W_{i}$
and $V_{i}$ be closed subspaces of $\mathcal{H}$. Let
$\{\fil\}_{l\in L_i}$ be a frame for $W_{i}$ and
$\{\tilde{f}_i^l\}_{l\in L_i}$  be a frame for $V_{i}$, with frame
bounds $\alpha_i,$ $\beta_i,$ $\tilde{\alpha}_i$ and
$\tilde{\beta}_i$, respectively. Suppose that $0<\alpha= \inf_{i\in
I}\alpha_i\leq \beta=\sup_{i\in I}\beta_i<\infty$ and
$0<\tilde{\alpha}=\inf_{i\in I}\tilde{\alpha}_i\leq
\tilde{\beta}=\sup_{i\in I}\tilde{\beta}_i<\infty$. Let
$Q:\mathcal{K}_{\mathcal{W}}\To \mathcal{K}_{\mathcal{V}}$,
$Q\{h_i\}_{i\in I}:=\{\sum_{l\in L_i}\langle h_i,f_i^l\rangle
\tilde{f}_i^l\}_{i\in I}$. The following conditions are equivalent.
  \begin{enumerate}
    \item $\{w_i\fil\}_{i\in I, l\in L_i}$ and $\{v_i\fmonio\}_{i\in I, l\in L_i}$ are dual frames in
    $\mathcal{H}$.
    \item $(\mathcal{V},v)$ is a $Q$-dual fusion frame of $(\mathcal{W},w)$.
  \end{enumerate}
\end{thm}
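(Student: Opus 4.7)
The plan is to reduce the equivalence to a direct computation of $T_{\mathbf{V},\mathbf{v}}QT^{*}_{\mathbf{W},\mathbf{w}}$, supplemented by the standard correspondence between ``fusion frame systems'' and frames for $\mathcal{H}$. First I would invoke the fact (essentially \cite[Thm.~3.2]{Casazza-Kutyniok (2004)}) that, under the uniform bounds $0<\alpha\leq\beta<\infty$ and $0<\tilde{\alpha}\leq\tilde{\beta}<\infty$, the family $\{w_{i}\fil\}_{i\in I,\,\ell\in L_{i}}$ is a Bessel sequence (respectively, a frame) for $\mathcal{H}$ if and only if $(\mathbf{W},\mathbf{w})$ is a Bessel fusion sequence (respectively, a fusion frame), and analogously for $\{v_{i}\fmonio\}$ and $(\mathbf{V},\mathbf{v})$. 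Thus, once either (1) or (2) is assumed, both collections qualify automatically as fusion frames and as frames, so the two statements are meaningful.

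Second, I would verify that $Q$ is a well-defined bounded operator from $\mathcal{W}$ to $\mathcal{V}$. For each $i$, $\fil\in W_{i}$ gives $\sum_{\ell}|\langle h_{i},\fil\rangle|^{2}\leq \beta\|h_{i}\|^{2}$, and the upper Bessel bound of $\{\fmonio\}_{\ell}$ in $V_{i}$ yields
\[
\bigl\|\textstyle\sum_{\ell}\langle h_{i},\fil\rangle \fmonio\bigr\|^{2}\leq\tilde{\beta}\sum_{\ell}|\langle h_{i},\fil\rangle|^{2}\leq\tilde{\beta}\beta\|h_{i}\|^{2}.
\]
Summing over $i$ shows $\|Q\{h_{i}\}\|^{2}\leq\tilde{\beta}\beta\|\{h_{i}\}\|^{2}$; since $\sum_{\ell}\langle h_{i},\fil\rangle\fmonio\in V_{i}$, $Q$ indeed maps $\mathcal{W}$ into $\mathcal{V}$.

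The core of the proof is then the identity
\begin{align*}
T_{\mathbf{V},\mathbf{v}}QT^{*}_{\mathbf{W},\mathbf{w}}f
&=T_{\mathbf{V},\mathbf{v}}Q\{w_{i}\pi_{W_{i}}f\}_{i\in I}
=\sum_{i\in I}v_{i}\sum_{\ell\in L_{i}}\langle w_{i}\pi_{W_{i}}f,\fil\rangle \fmonio\\
&=\sum_{i\in I}\sum_{\ell\in L_{i}}\langle f,w_{i}\fil\rangle\, v_{i}\fmonio,
\end{align*}
where I use $\fil\in W_{i}$ to replace $\pi_{W_{i}}f$ by $f$ inside the inner product. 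Consequently $T_{\mathbf{V},\mathbf{v}}QT^{*}_{\mathbf{W},\mathbf{w}}=I_{\mathcal{H}}$ is exactly the dual frame reconstruction identity for $\{w_{i}\fil\}_{i,\ell}$ and $\{v_{i}\fmonio\}_{i,\ell}$, which proves the equivalence in both directions.

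The only technical obstacle I anticipate is the passage between the iterated double sum $\sum_{i}\sum_{\ell}$ arising in the fusion frame computation and the unrestricted sum over pairs $(i,\ell)$ used in the definition of dual frames; this is handled by the unconditional convergence of Bessel series together with the uniform bounds, so no genuinely new ideas are needed beyond careful bookkeeping.
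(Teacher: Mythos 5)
Your proposal is correct and follows essentially the same route as the paper's proof: both invoke \cite[Theorem 3.2]{Casazza-Kutyniok (2004)} to handle the frame/fusion-frame correspondence, establish boundedness of $Q$ via the local Bessel bounds, and reduce the equivalence to the computation $T_{\textbf{V},{\bf v}}QT^{*}_{\textbf{W},{\bf w}}f=\sum_{i\in I}\sum_{l\in L_i}\langle f,w_i f_i^l\rangle v_i\tilde{f}_i^l$ using $\langle \pi_{W_i}(f),f_i^l\rangle=\langle f,f_i^l\rangle$. Your explicit remark on rearranging the iterated sum into a sum over pairs $(i,l)$ is a minor point of care the paper leaves implicit, not a different argument.
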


\begin{proof}
By \cite[Theorem 3.2]{Casazza-Kutyniok (2004)} it only remains to
see the duality. If $\{h_i\}_{i\in I} \in
\mathcal{K}_{\mathcal{W}}$, by (\ref{E cond B1}) and (\ref{E cond
f}),

\centerline{$\sum_{i\in I}\|\sum_{l\in L_i}\langle h_i,f_i^l\rangle
\tilde{f}_i^l\|^{2}\leq\sum_{i\in
I}\tilde{\beta}_{i}^2\beta_{i}^2\|h_i\|^2
\leq\tilde{\beta}^2\beta^2\sum_{i\in I}\|h_i\|^2 < \infty.$}

\noindent  So $Q$ is a well defined bounded operator.

Using that $\langle \pi_{W_i}(f),f_i^l\rangle=\langle
f,f_i^l\rangle$, we obtain

\centerline{$T_{\mathcal{V},v}QT^*_{\mathcal{W},w}(f)=\sum_{i\in
I}\sum_{l\in L_i}\langle f,w_i f_i^l\rangle v_i\tilde{f}^l_i.$}

\noindent Finally, the last term is equal to $f$ for all $f\in
\mathcal{H}$ if and only if $\{w_if^l_i\}_{i\in I, l\in L_i}$ and
$\{v_i\tilde{f}^l_i\}_{i\in I, l\in L_i}$ are dual frames in
$\mathcal{H}$.
\end{proof}
In the following example we present harmonic fusion frames with
non-canonical dual fusion frames, which are also harmonic.
\begin{example}
Let $a \in \mathbb{R}$, $g\in L^2(\Real)$, $E_{a}g(x):=e^{2\pi i a
x}g(x)$ and $T_{a}g(x):=g(x-a)$. Suppose that the so-called Gabor
system $\{E_{am}T_{n}g\}_{m,n\in \Z}$ is a Parseval frame. Fix $N
\in \mathbb{N}$ and define

\centerline{$W_{i}=\overline{\text{span}}\{E_{a(Nm+i)}T_{n}g\}_{m,n\in
\Z}$, $~~0 \leq i \leq N-1$.}

\noindent We have $W_{0}=E_{a}W_{N-1} ~\text{ and }~
W_{i+1}=E_{a}W_{i} ~\text{ for }~ 0 \leq i \leq N-2.$ The family
$(\mathcal{W},1)$ is the finite harmonic fusion frame considered in
\cite[Example 6.4]{Casazza-Kutyniok (2004)}.

Let now $d \in \mathbb{C}$ and $N \in \mathbb{N}$ be such that
$\frac{1}{\sqrt{2}} <|d|<1$, $|d|^{2}N \in \mathbb{N}$ and $|d|^{2}N
>1$. Let $c_{i} \in \mathbb{C}$ for $i=1, \ldots, N-|d|^{2}N,$ with
some $c_i\neq 0,$ such that $\sum_{i=1}^{N-|d|^{2}N}c_{i}=0.$ Set

\centerline{$g=d\chi_{[0,1)}\,\,\text{ and
}\,\,\,h=d\chi_{[0,1)}+\sum_{i=1}^{N-|d|^{2}N}c_{i}\chi_{[1+\frac{i-1}{|d|^{2}N},1+\frac{i}{|d|^{2}N})}.$}

\noindent Let
$W_{i}=\overline{\text{span}}\{E_{|d|^2(Nm+i)}T_{n}g\}_{m,n\in \Z}$
and $V_{i}=\overline{\text{span}}\{E_{|d|^2(Nm+i)}T_{n}h\}_{m,n\in
\Z}$, for $0 \leq i \leq N-1$. We are going to show that the finite
harmonic fusion frame $(\mathcal{V},1)$ is a dual fusion frame of
$(\mathcal{W},1)$, and it is not the canonical dual.

\smallskip

We have $\sum_{n \in \mathbb{Z}}|g(x-n)|=|d|^{2}$ and $\sum_{n \in
\mathbb{Z}}g(x-n)\overline{g(x-n-\frac{k}{|d|^{2}})}=0$ a.e.. By
\cite[Theorems 9.5.2 (ii) and 8.3.1 (ii)]{Christensen (2003)},
$\{E_{|d|^{2}m}T_{n}g\}_{m,n\in \Z}$ is a Parseval Gabor frame but
not a Riesz basis for $L^2(\Real)$.

\smallskip

Since $\{E_{|d|^{2}mN}T_{n}g\}_{m, n \in \mathbb{Z}} \subseteq
\{E_{|d|^{2}m}T_{n}g\}_{m, n \in \mathbb{Z}}$ and
$\{E_{|d|^{2}m}T_{n}g\}_{m, n \in \mathbb{Z}}$ is a Bessel sequence
for $L^2(\Real)$, then $\{E_{|d|^{2}mN}T_{n}g\}_{m, n \in
\mathbb{Z}}$ is a Bessel sequence for $W_{0}$. Moreover, it is an
orthogonal system with elements of equal norm $|d|$. Consequently,
the associated well defined frame operator is
$|d|^{2}I_{L^2(\Real)}$ and $\{E_{|d|^{2}mN}T_{n}g\}_{m,n\in \Z}$ is
an orthogonal basis for $W_{0}$. Thus, since $E_{|d|^{2}Ni}$ is a
unitary operator, $\{E_{|d|^{2}N(mN+i)}T_{n}g\}_{m,n\in \Z}$ is a
$|d|^{2}$-tight frame for $W_{i}$, $0 \leq i \leq N-1$.

\smallskip

By \cite[Theorem 8.3.1 (i)]{Christensen (2003)}, $W_{0} \neq
L^{2}(\mathbb{R})$. Hence, since $E_{|d|^{2}Ni}$ is a unitary
operator, $W_{i} \neq L^{2}(\mathbb{R})$ for $0 \leq i \leq N-1$.

\smallskip

By \cite[Proposition 5.3.5]{Christensen (2003)}, using that
$\{E_{|d|^{2}(Nm+i)}T_{n}g\}_{m,n\in \Z}$ is a $|d|^{2}$-tight frame
for $W_{i}$, $0 \leq i \leq N-1$, and
$\{E_{|d|^{2}m}T_{n}g\}_{m,n\in \Z}$ is a Parseval Gabor frame for
$L^2(\Real)$, we obtain $S_{W,1}=|d|^{-2}I_{L^2(\Real)}.$ Thus
$(\mathcal{W},1)$ is a finite harmonic $|d|^{-2}$-tight fusion
frame, so it coincides with its canonical dual.

\smallskip

Let $\chi_{[1,|d|^{-2})}^{|d|^{-2}}$ be the $|d|^{-2}$-periodic
extension to the real line of the restriction of
$\chi_{[1,|d|^{-2})}$ to $[0,|d|^{-2})$. Then,

\centerline{$1-\sum_{m\in
  \Z}|d|^{2}\langle \chi_{[0,1)},E_{|d|^{2}m}\chi_{[0,1)}\rangle E_{|d|^{2}m}=\chi_{[1,|d|^{-2})}^{|d|^{-2}}$.}

The function
$f=\sum_{i=1}^{N-|d|^{2}N}c_{i}\chi_{[1+\frac{i-1}{|d|^{2}N},1+\frac{i}{|d|^{2}N})}$
belongs to the Wiener space, i.e., $\sum_{k\in
Z}\|f\chi_{[k,(k+1))}\|_\infty<\infty.$ As a consequence of
\cite[Proposition 8.5.2]{Christensen (2003)},
$\{E_{|d|^{2}mN}T_{n}f\}_{m,n\in \Z}$ is a Bessel sequence.

We have $h=g+f(1-\sum_{m\in\Z}|d|^{2}\langle
\chi_{[0,1)},E_{|d|^{2}m}\chi_{[0,1)}\rangle E_{|d|^{2}m})$ and, by
\cite[Proposition 9.3.8]{Christensen (2003)},
$\{E_{|d|^{2}m}T_{n}h\}_{m,n\in \Z}$ is a dual frame of
$\{E_{|d|^{2}m}T_{n}g\}_{m,n\in \Z}$.

\smallskip

The restriction to the intervals of the form $[n,n+1)$ of any
function in $W_{0}$ is $\frac{1}{|d|^{2}N}$-periodic with $|d|^{2}N$
periods. So $h \notin W_{0}$ and consequently, $V_{0}\neq
  W_{0}$. Since $V_{i}=E_{|d|^{2}i}V_{0}$ and
$W_{i}=E_{|d|^{2}i}W_{0}$, it follows that $V_{i} \neq W_{i}$ for $0
\leq i \leq N-1$.

\smallskip

The collection $\{E_{|d|^{2}m}T_{n}h\}_{m, n \in \mathbb{Z}}$ is a
Bessel sequence for $L^2(\Real)$ and $\{E_{|d|^{2}mN}T_{n}h\}_{m, n
\in \mathbb{Z}} \subseteq \{E_{|d|^{2}m}T_{n}h\}_{m, n \in
\mathbb{Z}}$, thus $\{E_{|d|^{2}mN}T_{n}h\}_{m, n \in \mathbb{Z}}$
is a Bessel sequence for $V_{0}$. Hence it has a well defined
bounded frame operator. We have

\centerline{$\sum_{m,n\in \Z}\langle
E_{|d|^{2}m'N}T_{n'}h,E_{|d|^{2}mN}T_{n}h\rangle
E_{|d|^{2}mN}T_{n}h=\|h\|^{2}E_{|d|^{2}m'N}T_{n'}h,$}

\noindent thus $\{E_{|d|^{2}mN}T_{n}h\}_{m, n \in \mathbb{Z}}$ is a
$\|h\|^{2}$-tight frame for $V_{0}$. Since $E_{|d|^{2}i}$ is a
unitary operator, $\{E_{|d|^{2}(mN+i)}T_{n}h\}_{m, n \in
\mathbb{Z}}$ is a $\|h\|^{2}$-tight frame for $V_{i}$, $0 \leq i
\leq N-1$.

\smallskip

Now we can conclude, by Theorem~\ref{T fusion frame system dual},
that $(\mathcal{V},1)$ is a dual fusion frame of $(\mathcal{W},1)$,
and it is not the canonical dual.

\end{example}

\section*{Acknowledgement}

The authors thank the valuable comments and suggestions of the
referee that improved the presentation of the paper. S. B. Heineken
thanks for the hospitality during her visit at the Departamento de
Matem\'atica of FCFMyN, UNSL. This work was supported by grants
UBACyT 2011-2014 (UBA), PICT-2007-00865 (FonCyT, ANPCyT) and
P-317902 (UNSL).


\end{document}